\newtheorem{theorem}{Theorem}[section]
\theoremstyle{definition}
\newtheorem{example}[theorem]{Example}
\theoremstyle{plain}
\numberwithin{equation}{section}
\numberwithin{figure}{section}
\renewcommand{\geq}{\geqslant}
\renewcommand{\leq}{\leqslant}
\setlist[enumerate]{leftmargin=20pt,itemsep=0pt,topsep=0pt}
\setlist[enumerate,1]{label=\emph{(\roman*)},ref={(\roman*)}}
\renewcommand\section{\@startsection {section}{1}{\z@}%
                                   {-3.5ex \@plus -1ex \@minus -.2ex}%
                                   {1.3ex \@plus.2ex}%
                                   {\normalfont\large\scshape}}
\title{ \vspace{-5ex}\bf \Large Stability of the Denjoy--Wolff theorem}
\renewcommand*{\@fnsymbol}[1]{\hspace*{-10pt}}
\author{Argyrios Christodoulou and Ian Short\thanks{2010 Mathematics Subject Classification: Primary 30D05; Secondary 30F45.}\thanks{Key words: Denjoy--Wolff theorem, holomorphic map, hyperbolic metric.}\thanks{School of Mathematics and Statistics, The Open University, Milton Keynes, MK7 6AA, United Kingdom.}}
\date{\vspace{-5ex}}
\begin{document}

\maketitle

\begin{abstract}
The Denjoy--Wolff theorem is a foundational result in complex dynamics, which describes the dynamical behaviour of the sequence of iterates of a holomorphic self-map $f$ of the unit disc $\mathbb{D}$. Far less well understood are nonautonomous dynamical systems $F_n=f_n\circ f_{n-1} \circ \dots \circ f_1$ and $G_n=g_1\circ g_{2} \circ \dots \circ g_n$, for $n=1,2,\dotsc$, where $f_i$ and $g_j$ are holomorphic self-maps of $\mathbb{D}$. Here we obtain a thorough understanding of such systems $(F_n)$ and $(G_n)$ under the assumptions that $f_n\to f$ and $g_n\to f$. We determine when the dynamics of $(F_n)$ and $(G_n)$ mirror that of $(f^n)$, as specified by the Denjoy--Wolff theorem, thereby providing insight into the stability of the Denjoy--Wolff theorem under perturbations of the map $f$. 
\end{abstract}

\section{Introduction}

Fundamental to this paper is the Denjoy--Wolff theorem (see, for example, \cite[Theorem~5.4]{Mi2006}), which can be stated as follows.

\newtheorem*{theoremA}{Theorem A}
\begin{theoremA}
Suppose that $f$ is a holomorphic self-map of the open unit disc $\mathbb{D}$. Then either
\begin{enumerate}
\item $f$ is the identity function or an elliptic M\"obius transformation that fixes $\mathbb{D}$, or
\item there exists a point $\zeta \in\overline{\mathbb{D}}$ such that the sequence of iterates $f,f^2,f^3,\dotsc$ converges locally uniformly on $\mathbb{D}$ to $\zeta$. 
\end{enumerate}
\end{theoremA}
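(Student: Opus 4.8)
The engine of the argument is the Schwarz--Pick lemma, which asserts that every holomorphic self-map of $\mathbb{D}$ is non-expanding for the hyperbolic metric $\rho$, with equality in the distance estimate at a pair of distinct points only when the map is a Möbius automorphism of $\mathbb{D}$. The plan is to split the proof according to whether or not $f$ fixes a point of $\mathbb{D}$; the fixed-point-free case carries essentially all of the difficulty.

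Suppose first that $f$ fixes a point $p\in\mathbb{D}$. Conjugating by a Möbius automorphism I may take $p=0$, so the Schwarz lemma gives $|f(z)|\leq|z|$ for all $z$ and $|f'(0)|\leq 1$. If equality holds in either estimate, the rigidity clause makes $f$ a rotation about $0$, that is, the identity or an elliptic Möbius transformation fixing $\mathbb{D}$, which is alternative (i). Otherwise $|f(z)|<|z|$ for every $z\neq 0$; the moduli $|f^n(z)|$ then decrease to a limit $\ell(z)\geq 0$, and passing to a subsequential limit $w$ of $(f^n(z))$ gives $|f(w)|=|w|=\ell(z)$, which forces $\ell(z)=0$. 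Thus $f^n\to 0$ pointwise, and since $(f^n)$ is a normal family Vitali's theorem upgrades this to locally uniform convergence, giving alternative (ii) with $\zeta=0$.

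The substance of the theorem is the fixed-point-free case, where I would follow Wolff's method. For each $r\in(0,1)$ the map $f_r=rf$ sends $\mathbb{D}$ into the compact set $\overline{r\mathbb{D}}$ and hence has a unique fixed point $p_r\in\mathbb{D}$; letting $r\to 1^-$ along a subsequence I extract a limit $p_r\to\zeta\in\overline{\mathbb{D}}$, and the absence of an interior fixed point forces $\zeta\in\partial\mathbb{D}$. Applying the Schwarz--Pick inequality at the points $p_r$ and passing to the limit yields Julia's inequality, from which I obtain the \emph{horocycle-invariance} property: writing $E(\zeta,t)=\{\,z\in\mathbb{D}:|\zeta-z|^2<t\,(1-|z|^2)\,\}$ for the horocycle at $\zeta$ of parameter $t>0$, one has $f\big(E(\zeta,t)\big)\subseteq E(\zeta,t)$ for every $t$. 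This is the crux of the argument and the step I expect to be the main obstacle, since it requires extracting boundary information about $f$ from the limiting fixed points $p_r$ through a delicate passage to the limit in the invariant form of the Schwarz lemma.

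It then remains to deduce convergence. Fix $z_0\in\mathbb{D}$ and let $E(\zeta,t_0)$ be the horocycle through $z_0$; by invariance the orbit $(f^n(z_0))$ stays in $\overline{E(\zeta,t_0)}$, whose closure meets $\partial\mathbb{D}$ in the single point $\zeta$. I would first show that the orbit escapes to the boundary: the distances $\rho(f^n(z_0),f^{n+1}(z_0))$ decrease by Schwarz--Pick, so any interior accumulation point $c$ of the orbit would produce equality in Schwarz--Pick at the distinct points $c$ and $f(c)$, forcing $f$ to be an automorphism; the fixed-point-free automorphisms are parabolic or hyperbolic and are handled by direct computation. Hence $|f^n(z_0)|\to 1$, and confinement to $\overline{E(\zeta,t_0)}$ pins the only possible limit to $\zeta$. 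As $(f^n)$ is normal and this argument applies at every $z_0$, Vitali's theorem yields $f^n\to\zeta$ locally uniformly, which is alternative (ii).
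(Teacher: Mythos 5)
The paper never proves Theorem~A at all: it is quoted as known background, with a citation to Milnor's book, and the paper's own results merely build on it. So there is no in-paper proof to measure your proposal against; the relevant benchmark is the classical proof, and your outline is precisely that classical (Wolff) argument, correct in its structure. The interior-fixed-point case is complete as written: Schwarz rigidity yields alternative (i), and otherwise the strict decrease of $|f^n(z)|$, a subsequential-limit argument, and the Vitali--Porter theorem give locally uniform convergence to the fixed point. The fixed-point-free case follows Wolff: the fixed points $p_r$ of $rf$ accumulate at some $\zeta\in\partial\mathbb{D}$, Schwarz--Pick applied at $p_r$ passes in the limit to Julia's inequality and horocycle invariance, orbits are then trapped in horodiscs meeting $\partial\mathbb{D}$ only at $\zeta$, interior accumulation points are excluded via monotonicity of $\rho(f^n(z_0),f^{n+1}(z_0))$ and the equality clause of Schwarz--Pick, and the fixed-point-free automorphisms are checked directly. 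Two points would need to be written out for a complete proof, though neither is a flaw in the method. First, Wolff's lemma itself, which you assert rather than derive: the limiting argument does work, because the invariant hyperbolic discs $\{w:\rho(w,p_r)\leq\rho(z,p_r)\}$ are Euclidean discs through $z$ whose centres and radii converge, as $p_r\to\zeta$, to those of the horodisc at $\zeta$ through $z$, while $rf\to f$ locally uniformly, giving $f(z)\in\overline{E(\zeta,t)}$ and then invariance of the open horodiscs. Second, in your escape-to-boundary step you should cover the case where the step lengths $\rho(f^n(z_0),f^{n+1}(z_0))$ tend to $0$: there an interior accumulation point would be an interior fixed point of $f$, again a contradiction; the Schwarz--Pick equality mechanism is what you need only when the limit of the step lengths is positive.
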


To explain the terminology in this theorem, an \emph{elliptic M\"obius transformation} that fixes $\mathbb{D}$ is a conformal automorphism of $\mathbb{D}$ that is conjugate by another conformal automorphism to a rotation about the origin. For each positive integer $n$, the $n$th \emph{iterate} $f^n$ of a holomorphic map $f$ is the function obtained by composing $f$ with itself $n$ times, $f^n=f\circ f\circ \dots \circ f$. The theorem states that the iterates $f,f^2,f^3,\dotsc$ converge locally uniformly on $\mathbb{D}$ to $\zeta$, meaning that the sequence of functions $f,f^2,f^3,\dotsc$ converges uniformly on compact subsets of $\mathbb{D}$ to $\zeta$, using the Euclidean metric on $\overline{\mathbb{D}}$. In case~(ii), the point $\zeta$ is called the \emph{Denjoy--Wolff point} of $f$; if $\zeta\in\mathbb{D}$ then it is a fixed point of $f$.

Our objective is to examine the stability of  Theorem~A under perturbations of the holomorphic map $f$, in a sense to be made precise shortly. We denote by $\mathcal{H}(\mathbb{D},\mathbb{C})$ the topological space of all holomorphic maps from $\mathbb{D}$ to the complex plane $\mathbb{C}$, equipped with the compact-open topology. In $\mathcal{H}(\mathbb{D},\mathbb{C})$, a sequence $(f_n)$ converges to a map $f$ if and only if $f_n\to f$ locally uniformly on $\mathbb{D}$. 

We focus on the subspace $\mathcal{H}(\mathbb{D})$ of holomorphic self-maps of $\mathbb{D}$. If $(f_n)$ is a sequence in $\mathcal{H}(\mathbb{D})$ that converges locally uniformly on $\mathbb{D}$ to a function $f$, then  either $f\in\mathcal{H}(\mathbb{D})$ or else $f$ is a constant function with value on the boundary of $\mathbb{D}$ (see \cite[Lemma~2.1]{Be2001b}). 

Given sequences $(f_n)$ and $(g_n)$ in $\mathcal{H}(\mathbb{D})$, we define the \emph{left-composition sequence} generated by $(f_n)$ and the \emph{right-composition sequence} generated by $(g_n)$ to be the sequences  
\[
F_n=f_n\circ f_{n-1} \circ \dots \circ f_1 \quad\text{and}\quad G_n=g_1\circ g_2\circ \dots\circ g_n,\quad n=1,2,\dotsc,
\]
respectively. Sequences of this type arise in a variety of contexts in dynamical systems, with differing notations and terminology. In future we omit the $\circ$ symbol from compositions.

The dynamical behaviour of the sequence of iterates $(f^n)$, where $f\in\mathcal{H}(\mathbb{D})$, depends on whether $f$ is the identity function, an elliptic M\"obius transformation, or if it has a Denjoy--Wolff point that lies in $\mathbb{D}$ or on the boundary of $\mathbb{D}$. We determine whether the dynamics of $(F_n)$ and $(G_n)$ are similar to that of $(f^n)$ under the assumptions that $f_n\to f$ and $g_n\to f$. We find that, in a sense, right-composition sequences are more stable than left-composition sequences when $f$ has a Denjoy--Wolff point inside $\mathbb{D}$, but the reverse holds when the Denjoy--Wolff point of $f$ lies on the boundary of $\mathbb{D}$. And when $f$ is the identity function, there is similar stability for both left- and right-composition sequences.

We make significant use of the hyperbolic metric on $\mathbb{D}$, which is the Riemannian metric $2|dz|/(1-|z|^2)$. We denote the corresponding distance function by $\rho$. Crucial to our study is the Schwarz--Pick lemma, which says that if $f\in\mathcal{H}(\mathbb{D})$, then $\rho(f(z),f(w))\leq \rho(z,w)$, for $z,w\in\mathbb{D}$, with equality if and only if $f$ is a conformal automorphism of $\mathbb{D}$. If $f$ is not a conformal automorphism, then for each compact subset $K$ of $\mathbb{D}$ we can find a positive constant $k<1$ such that $\rho(f(z),f(w))\leq k\rho(z,w)$, for $z,w\in K$.

There is an extensive literature on stability results for holomorphic dynamical systems; we draw attention to the papers of Beardon~\cite{Be2001a}, Gill~\cites{Gi1988,Gi1990} and Pommerenke~\cite{Po1981} for work closest to our own. Beardon and Gill were motivated in part by the theory of limit-periodic continued fractions, in which one considers the stability of continued fractions under perturbations of the coefficients. In \cite{Be2001a}, Beardon looks at the stability of M\"obius transformations under iteration. We develop the geometric approach of \cite{Be2001a}, and apply it to the class of holomorphic maps, which is far larger and more complex than the class of M\"obius transformations. Note that Theorem~\ref{thm3} of Section~\ref{sec3} could be deduced quickly from \cite[Theorem~4.7]{Be2001a} (the proof we give is short anyway). 

Gill studies composition sequences of holomorphic maps for which the constituent maps approach a limit function. Using Euclidean estimates he obtains results of a similar type to Theorems~\ref{thm3} and~\ref{thm4}. One of the benefits of our geometric approach is that we obtain strong results with succinct statements and concise proofs using the hyperbolic metric.

Pommerenke considers right-composition sequences $(F_n)$ under the assumption that $f_n\to f$, for some non-elliptic map $f$, and attempts to find constants $a_n$ and $b_n$ such that $a_nF_n+b_n\to F$, for some non-constant function $F$. Whether this is possible depends on the nature of the Denjoy--Wolff point of $f$. Our objectives are somewhat tangential to this, such that we obtain a complete analysis of stability for both left- and right- composition sequences and any choice of holomorphic map $f$.

\section{Stability at elliptic transformations and the identity function}

Here we consider the behaviour of the left- and right-composition sequences $F_n=f_nf_{n-1}\dotsb f_1$ and $G_n=g_1g_2\dotsb g_n$, where $f_n,g_n\in\mathcal{H}(\mathbb{D})$, under the assumption that the sequences $(f_n)$ and $(g_n)$ converge to an elliptic M\"obius transformation fixing $\mathbb{D}$ or the identity function $I$. We focus particularly on the latter case, because the iterates of an elliptic transformation do not themselves converge in $\mathcal{H}(\mathbb{D},\mathbb{C})$.

The next example demonstrates that when $f_n\to I$, and without further assumptions, the sequence $(F_n)$ can behave erratically.

\begin{example}\label{exa1} 
Let $f_n(z)=e^{i/n}z$, for $n=1,2,\dotsc$, so $f_n\to I$. Then 
\[
F_n(z) = e^{i\left(1+\tfrac12+\dots+\tfrac{1}{n}\right)}z.
\]
This sequence accumulates at the identity function and every rotation of the unit circle.\qed
\end{example}

Essentially the same example can be used with $g_n$ in place of $f_n$ and $G_n$ in place of $F_n$, because the functions commute.

We can get quite different behaviour with other choices for functions $f_n\to I$. For example, choosing $f_n(z)=(1-1/n)z$, for $n=2,3,\dotsc$, we see that $(F_n)$ converges locally uniformly on $\mathbb{D}$ to $0$.

Example~\ref{exa1} indicates that to obtain more controlled behaviour of $(F_n)$ and $(G_n)$ under the assumption that $f_n\to I$ and $g_n\to I$ we need additional constraints on convergence. Theorems~\ref{thm1} and~\ref{thm2}, to follow, show that such control can be achieved if we stipulate that the convergence is sufficiently fast (in a sense to be made precise). In fact, using the following result from \cite[Theorem 1.1]{ChSh2019}, we will see that it is sufficient to assume that $(f_n)$ and $(g_n)$ converge to the identity function suitably fast at just two  points in $\mathbb{D}$.

\newtheorem*{theoremB}{Theorem B}
\begin{theoremB}
Suppose that $f,g\in\mathcal{H}(\mathbb{D})$, with $g$ a conformal automorphism of $\mathbb{D}$,  and $a,b,z\in\mathbb{D}$, with $a\neq b$. Then
\[
\rho(f(z),g(z)) \leq \lambda\big(\rho(f(a),g(a))+\rho(f(b),g(b))\big),
\]
where
\[
\lambda= \frac{\exp\left(\rho(z,a)+\rho(a,b)+\rho(b,z)\right)}{\rho(a,b)}.
\]
\end{theoremB}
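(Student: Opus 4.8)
The plan is first to eliminate $g$. Since $g$ is a conformal automorphism, $g^{-1}$ is a hyperbolic isometry, so with $h=g^{-1}f\in\mathcal H(\mathbb D)$ we have $\rho(f(w),g(w))=\rho(h(w),w)$ for every $w\in\mathbb D$. Writing $\phi(w)=\rho(h(w),w)$ for the hyperbolic displacement of $h$, the assertion becomes the intrinsic statement
\[
\phi(z)\le\frac{\exp\!\big(\rho(z,a)+\rho(a,b)+\rho(b,z)\big)}{\rho(a,b)}\big(\phi(a)+\phi(b)\big).
\]
Because both sides are unchanged when $a,b,z$ are moved by a common automorphism (and $h$ is conjugated accordingly), I may also normalise, say $z=0$, remembering that $\lambda$ depends only on the three mutual distances.

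Next I would isolate the two ways in which $h$ can displace points while $\phi(a)$ and $\phi(b)$ stay small. The triangle inequality together with the Schwarz--Pick lemma gives the defect estimate
\[
0\le\rho(a,b)-\rho(h(a),h(b))\le\phi(a)+\phi(b),
\]
and, writing $|h^{*}(w)|=(1-|w|^{2})|h'(w)|/(1-|h(w)|^{2})\le1$ for the modulus of the hyperbolic derivative and integrating along the geodesic $[a,b]$, this upgrades to $\int_{[a,b]}(1-|h^{*}|)\,d\rho\le\phi(a)+\phi(b)$. Dividing by the length $\rho(a,b)$ controls, on average over $[a,b]$, how far $h$ is from a hyperbolic isometry; this is one source of the factor $1/\rho(a,b)$. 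The second mode is the isometric part: a hyperbolic isometry that nearly fixes both $a$ and $b$ is forced to be close to the identity, and here too the deviation is measured by $(\phi(a)+\phi(b))/\rho(a,b)$ --- for instance a rotation about a point of $[a,b]$ near $a$ has $\phi(b)$ at least of order $\rho(a,b)$ times its rotation angle, so smallness of $\phi(a)+\phi(b)$ bounds the angle by a multiple of $(\phi(a)+\phi(b))/\rho(a,b)$. This is precisely why two points are needed: at a single point one cannot detect, let alone bound, such a rotation.

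Finally I would transport this control from $[a,b]$ out to $z$. The mechanism behind the exponential factor is the distortion of the hyperbolic metric: displacement grows at most like $e^{\rho}$ as one moves away from where it is measured, a fact one can package through the identity $\cosh\big(\tfrac12\rho(u,v)\big)=|1-\bar v u|/\sqrt{(1-|u|^{2})(1-|v|^{2})}$, which shows that $\log\cosh\big(\tfrac12\rho(\cdot,\cdot)\big)$ is plurisubharmonic on $\mathbb D\times\mathbb D$ and hence that $w\mapsto\log\cosh\big(\tfrac12\phi(w)\big)$ is subharmonic. Combining this Harnack-type growth with the two bounds above, across $[a,b]$ and over the distances $\rho(z,a)$ and $\rho(z,b)$, is what assembles the perimeter exponent $\rho(z,a)+\rho(a,b)+\rho(b,z)$. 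I expect the main obstacle to be exactly this assembly: the rigidity information is naturally attached to the geodesic $[a,b]$, whereas $z$ is arbitrary, so the heart of the argument is a uniform distortion estimate turning pointwise near-isometry and near-fixing of $a,b$ into a displacement bound at $z$ with the stated exponential cost, and doing so for all holomorphic self-maps at once rather than merely for automorphisms.
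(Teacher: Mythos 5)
You should first be aware that this paper contains no proof of Theorem~B to compare against: it is imported verbatim from \cite[Theorem~1.1]{ChSh2019}, so your argument has to stand entirely on its own. Parts of it do stand. The opening reduction is correct: since $g^{-1}$ is a hyperbolic isometry, $\rho(f(w),g(w))=\rho(h(w),w)$ for $h=g^{-1}f\in\mathcal{H}(\mathbb{D})$, so the claim becomes a two-point bound on the displacement $\phi(w)=\rho(h(w),w)$. The defect estimate $0\le\rho(a,b)-\rho(h(a),h(b))\le\phi(a)+\phi(b)$ and its integrated form are correct, and even your plurisubharmonicity claim is true: writing $\log\cosh\tfrac12\rho(u,v)=\log|1-\bar{v}u|-\tfrac12\log(1-|u|^2)-\tfrac12\log(1-|v|^2)$, the complex Hessian is positive semidefinite precisely because $|1-\bar{v}u|^2\ge(1-|u|^2)(1-|v|^2)$, so $w\mapsto\log\cosh\tfrac12\phi(w)$ is indeed subharmonic. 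But what you have written is a plan, not a proof: the step that would actually establish the inequality is explicitly deferred (``I expect the main obstacle to be exactly this assembly''), and no bound on $\phi(z)$ is ever derived.

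The deeper problem is that the mechanism you propose for that missing step cannot supply it. Nonnegative subharmonic functions obey no Harnack inequality, and smallness of a subharmonic function at two points imposes no bound whatsoever at a third: if $B$ is a Blaschke product with zeros at $a$ and $b$, then $M|B|$ is nonnegative, subharmonic, vanishes at $a$ and $b$, and is arbitrarily large at $z$ as $M\to\infty$. Since your transport step uses nothing about $\log\cosh\tfrac12\phi$ beyond its subharmonicity, it cannot distinguish the displacement of a holomorphic self-map from such an example; yet Theorem~B is exactly a quantitative form of the rigidity statement that a map in $\mathcal{H}(\mathbb{D})$ fixing two points is the identity, and that rigidity is invisible at the level of subharmonicity. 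Holomorphy must enter the transport step itself. One workable route is Schwarz-lemma division: conjugate so that the near-fixed point $a$ sits at $0$, consider the holomorphic function $H(w)/w$ (where $H$ is the conjugated map), note that $|H(w)/w|\le1$ with near-equality at the image of $b$, deduce quantitatively from the near-equality case of the Schwarz--Pick lemma that $H$ is close to a rotation whose angle is $O\big((\phi(a)+\phi(b))/\rho(a,b)\big)$ --- this is where the two points and the factor $1/\rho(a,b)$ do their real work --- and only then propagate to $z$, paying the exponential factor. Your rotation heuristic describes this phenomenon for actual rotations but proves nothing for general holomorphic $h$. Note finally that the genuinely Harnack-type argument available here (applying Harnack to the positive harmonic function $-\log\tfrac12|h(w)-w|$ when $h$ is fixed-point-free) produces bounds of the shape $\phi(z)\lesssim\phi(a)^{\exp(-\rho(z,a))}$, which are power-type rather than linear in $\phi(a)+\phi(b)$; this is further evidence that the linear bound of Theorem~B cannot be reached by displacement-growth estimates alone.
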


We now state our first result about stability of the Denjoy--Wolff theorem at the identity function or an elliptic transformation, for left-composition sequences.

\begin{theorem}\label{thm1}
Suppose that $f$ is either the identity function or an elliptic M\"obius transformation that fixes $\mathbb{D}$, and $f_1,f_2,\dotsc$ are non-constant holomorphic self-maps of $\mathbb{D}$ for which 
\[
\sum_{n=1}^\infty \rho(f_n(a),f(a))<+\infty\quad\text{and}\quad\sum_{n=1}^\infty \rho(f_n(b),f(b))<+\infty,
\]
for two distinct points $a,b\in\mathbb{D}$. Then the sequence $(f^{-n}F_n)$, where $F_n=f_n f_{n-1}\dotsb f_1$, converges locally uniformly on $\mathbb{D}$ to a non-constant holomorphic self-map of $\mathbb{D}$. 
\end{theorem}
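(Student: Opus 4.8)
The plan is to normalise the left-composition sequence by the automorphism $f$ and to recognise the normalised sequence as a left-composition sequence of maps that are close to the identity. Set $\Phi_n = f^{-n}F_n$ (this makes sense since $f$, being the identity or elliptic, is a conformal automorphism of $\mathbb{D}$). Writing $h_k = f^{-k}f_k f^{k-1}$, one checks that $\Phi_n = h_n h_{n-1}\dotsb h_1$, so $(\Phi_n)$ is itself a left-composition sequence. The engine of the proof is the estimate, for $w\in\mathbb{D}$,
\[
\rho(h_k(w),w) = \rho\big(f_k(f^{k-1}(w)),f(f^{k-1}(w))\big)\le C_1\, e^{2\rho(w,a)}\,\varepsilon_k,
\]
where $\varepsilon_k = \rho(f_k(a),f(a))+\rho(f_k(b),f(b))$ and $C_1$ is a constant. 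The first equality uses that $f^{-k}$ is a hyperbolic isometry, and the inequality follows from Theorem~B applied to $f_k$ and $f$ at the point $f^{k-1}(w)$, together with the observation that the whole orbit $\{f^{j}(a):j\in\mathbb{Z}\}$ lies within a bounded hyperbolic distance $M$ of $a$ (indeed $M=0$ if $f=I$, and $M\le 2\rho(a,p)$ if $f$ is elliptic with fixed point $p$), so that $\rho(f^{k-1}(w),a)\le \rho(w,a)+M$. By hypothesis $\sum_k \varepsilon_k<\infty$.

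The main obstacle is to show that the orbit $\Phi_n(a)$ stays within a bounded hyperbolic distance of $a$; without such a bound the factor $e^{2\rho(w,a)}$ above cannot be controlled. The difficulty is that iterating the estimate gives a recursion of the form $s_n\le s_{n-1}+C_1 e^{2 s_{n-1}}\varepsilon_n$ for $s_n=\rho(\Phi_n(a),a)$, and such a recursion can blow up in finite time when $\sum\varepsilon_n$ is large. The resolution is to start the composition at a large index. Since $\sum_k\varepsilon_k<\infty$, choose $N$ so large that $\sum_{k>N}\varepsilon_k$ is as small as we please, and factorise $\Phi_n=\Phi_n^{(N)}\circ\Phi_N$, where $\Phi_n^{(N)}=h_n\dotsb h_{N+1}$ and $\Phi_N=f^{-N}F_N$ is a fixed holomorphic self-map of $\mathbb{D}$. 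The tail sequence $(\Phi_n^{(N)})_{n\ge N}$ is a left-composition sequence that begins at the identity, so its analogue of $s_n$, namely $t_n=\rho(\Phi_n^{(N)}(a),a)$, satisfies $t_N=0$ and $t_n\le t_{n-1}+C_1 e^{2 t_{n-1}}\varepsilon_n$. Applying the mean value theorem to $x\mapsto e^{-2x}$ turns this into the telescoping inequality $e^{-2t_{n-1}}-e^{-2t_n}\le 2C_1\varepsilon_n$, whence $1-e^{-2t_n}\le 2C_1\sum_{k>N}\varepsilon_k$. Choosing $N$ so that the right-hand side is less than $1$ yields a uniform bound $t_n\le T<\infty$.

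With this bound in hand the convergence is routine. For $z$ in a fixed closed hyperbolic ball $K=\overline{B}(a,R)$ the non-expansiveness of each $\Phi_{n-1}^{(N)}$ gives $\rho(\Phi_{n-1}^{(N)}(z),a)\le R+T$, so all the points $\Phi_{n-1}^{(N)}(z)$ lie in the compact set $K'=\overline{B}(a,R+T)$; the engine estimate then yields $\rho(\Phi_n^{(N)}(z),\Phi_{n-1}^{(N)}(z))=\rho\big(h_n(\Phi_{n-1}^{(N)}(z)),\Phi_{n-1}^{(N)}(z)\big)\le C_{K'}\varepsilon_n$, uniformly for $z\in K$. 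Since $\sum_n\varepsilon_n<\infty$, the sequence $(\Phi_n^{(N)})$ is uniformly Cauchy on $K$ in the hyperbolic metric, with all values confined to the compact set $K'\subset\mathbb{D}$; hence it converges uniformly on $K$ (in the Euclidean metric as well, the two metrics being equivalent on $K'$) to a map $\Psi$ that is holomorphic with values in $\mathbb{D}$. Consequently $\Phi_n=\Phi_n^{(N)}\circ\Phi_N$ converges locally uniformly on $\mathbb{D}$ to $\Phi=\Psi\circ\Phi_N$, a holomorphic self-map of $\mathbb{D}$.

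It remains to see that the limit is non-constant, and here the point of starting the tail at the identity reappears. Because each $h_n$ moves points of $K'$ by at most $C_{K'}\varepsilon_n$, the triangle inequality gives, for fixed $z,w\in K$,
\[
\rho\big(\Phi_n^{(N)}(z),\Phi_n^{(N)}(w)\big)\ge \rho\big(\Phi_{n-1}^{(N)}(z),\Phi_{n-1}^{(N)}(w)\big)-2C_{K'}\varepsilon_n,
\]
and telescoping from $n=N$, where $\Phi_N^{(N)}$ is the identity, yields $\rho(\Phi_n^{(N)}(z),\Phi_n^{(N)}(w))\ge \rho(z,w)-2C_{K'}\sum_{k>N}\varepsilon_k$. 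Taking $N$ large makes the subtracted term smaller than $\rho(z,w)$ (note that $C_{K'}$ stays bounded as $N\to\infty$ because $T\to 0$), so $\Psi(z)\neq\Psi(w)$ and $\Psi$ is non-constant. Finally $\Phi_N$ is a non-constant holomorphic map, being a composition of the non-constant maps $f_1,\dotsc,f_N$ with automorphisms, so its image is a non-empty open subset of $\mathbb{D}$; since a non-constant holomorphic function is non-constant on every such set, $\Phi=\Psi\circ\Phi_N$ is non-constant, as required.
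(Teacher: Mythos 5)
Your proof is correct, and although it shares the paper's skeleton --- Theorem~B, truncation so that the tail sum of $\varepsilon_k=\rho(f_k(a),f(a))+\rho(f_k(b),f(b))$ is small, a uniformly Cauchy argument, and non-constancy via separation of $a$ and $b$ --- the technical core is genuinely different. The paper never conjugates: it compares $F_n$ with $f^n$ directly, taking $K$ to be a closed hyperbolic disc centred at a fixed point of $f$, so that $K$ is invariant under every iterate $f^{\pm j}$, and it telescopes by swapping the \emph{innermost} maps, so every error term is evaluated along the unperturbed orbit $f^{j}(z)\in K$ and then pushed through the outer maps by the Schwarz--Pick lemma. This gives $\rho(F_n(z),f^n(z))<\tfrac13\rho(a,b)$ in one stroke, with no bootstrapping, from the summability of $\sup_K\rho(f_j,f)$ (itself a single application of Theorem~B on $K$). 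Your decomposition $f^{-n}F_n=h_n\dotsb h_1$ with $h_k=f^{-k}f_kf^{k-1}$ telescopes by swapping the \emph{outermost} maps, so errors are evaluated at the perturbed points $\Phi_{n-1}^{(N)}(z)$, whose location is exactly what is not yet known; since the constant in Theorem~B grows like $e^{2\rho(w,a)}$, this forces the nonlinear recursion $t_n\le t_{n-1}+C_1e^{2t_{n-1}}\varepsilon_n$, which you control with the mean-value (discrete Gronwall--Osgood) trick for $x\mapsto e^{-2x}$. That trick is the genuinely new ingredient: it is correct and is a reusable device for recursions of this type, but the paper's choice of $K$ makes it unnecessary and yields a shorter argument. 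On one point your write-up is more complete than the paper's: the paper's reduction ``it suffices to prove the theorem for the truncated sequence'' silently requires that precomposing the limit with the non-constant map $f_{N-1}\dotsb f_1$ preserves non-constancy, and your final paragraph (open mapping theorem plus identity theorem, using the hypothesis that the $f_i$ are non-constant) is precisely the justification of that step.
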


\begin{proof}
Let $d = \tfrac13\rho(a,b)$ and let $K$ be a closed hyperbolic disc that is centred at a fixed point of $f$ and contains $a$ and $b$. Observe that if $z\in K$, then $f^n(z)\in K$, for $n\in\mathbb{Z}$.  By applying Theorem~B to the functions $f_n$ and $f$, for $n=1,2,\dotsc$, we see that
\[
\sum_{n=1}^\infty \sup_{z\in K} \rho(f_n(z),f(z))<+\infty.
\]
Notice that it suffices to prove the theorem for the truncated left-composition sequence with $n$th term $f_nf_{n-1}\dotsb f_N$, where $N$ is a fixed positive integer. In light of this observation, we may assume (after relabelling the functions) that in fact
\[
\sum_{n=1}^\infty\sup_{z\in K} \rho(f_n(z),f(z)) <d.
\]
Choose any point $z\in K$. Let $z_n=f^n(z)$, for $n=1,2,\dotsc$. Then $z_n\in K$. Observe that 
\begin{align*}
\rho(F_n(z),f^n(z)) &\leq \rho(f_n\dotsb f_1(z),f_n\dotsb f_2f(z))+\rho(f_n\dotsb f_2(f(z)),f^{n-1}(f(z)))\\
&\leq \rho(f_1(z),f(z))+\rho(f_n\dotsb f_2(z_1),f^{n-1}(z_1)),
\end{align*}
where, to obtain the second inequality, we have applied the Schwarz--Pick lemma with the function $f_n\dotsb f_2$. Repeating this argument we see that
\[
\rho(F_n(z),f^n(z)) \leq \rho(f_1(z),f(z))+\rho(f_2(z_1),f(z_1))+\dots+\rho(f_n(z_{n-1}),f(z_{n-1}))<d,
\]
for $n=1,2,\dotsc$.

Next, still with $z\in K$, we have
\begin{align*}
\rho(F_n(z),a) &\leq \rho(F_n(z),F_n(a))+\rho(F_n(a),f^n(a))+\rho(f^n(a),a)\\
&\leq \rho(z,a)+d+\rho(f^n(a),a)\leq l,
\end{align*}
for $n=1,2,\dotsc$, where $l$ is three times the hyperbolic diameter of $K$. Similarly $\rho(F_n(z),b)\leq l$. Applying Theorem~B to the functions $f_n$ and $f$, and with $F_{n-1}(z)$ in place of $z$, we obtain
\[
\rho(F_n(z),f(F_{n-1}(z))) \leq \lambda (\rho(f_n(a),f(a))+\rho(f_n(b),f(b))),
\]
where
\[
\lambda= \frac{\exp\left(\rho(F_{n-1}(z),a)+\rho(a,b)+\rho(b,F_{n-1}(z))\right)}{\rho(a,b)}\leq \frac{\exp(3l)}{\rho(a,b)}.
\]
Consequently, we see that
\[
\sum_{n=1}^\infty \rho(f^{-n}F_n(z),f^{-(n-1)}(F_{n-1}(z)))=\sum_{n=1}^\infty \rho(F_n(z),f(F_{n-1}(z))) <2\lambda d,
\]
for $z\in K$ (where $F_0$ is the identity function). Thus $(f^{-n}F_n)$ is a uniformly Cauchy sequence on $K$. Now, $K$ is an arbitrarily large compact subset of $\mathbb{D}$, so it follows that $(f^{-n}F_n)$ converges locally uniformly on $\mathbb{D}$ to a function $F$.

The function $F$ belongs to $\mathcal{H}(\mathbb{D})$, and it is not a constant function because
\[
\rho(f^{-n}F_n(a),f^{-n}F_n(b)) \geq \rho(a,b)-\rho(f^{-n}F_n(a),a)-\rho(f^{-n}F_n(b),b)>3d-d-d=d,
\]
for $n=1,2,\dotsc$. 
\end{proof}

When $f$ is the identity function $I$, Theorem~\ref{thm1} says that if $\sum \rho(f_n(a),a)<+\infty$ and $\sum \rho(f_n(b),b)<+\infty$, then the left-composition sequence $F_n=f_nf_{n-1}\dotsb f_1$ converges locally uniformly on $\mathbb{D}$ to a non-constant holomorphic map $F\in \mathcal{H}(\mathbb{D})$. And when $f$ is an elliptic transformation of finite order $m$, the theorem tells us that the sequence $(F_n)$ can be split into $m$ subsequences that converge to $F,fF,\dots,f^{m-1}F$, respectively. For the remaining case, when $f$ is an elliptic transformation of infinite order, we see from Theorem~\ref{thm1} that $(F_n)$ accumulates at uncountably many different non-constant maps in $\mathcal{H}(\mathbb{D})$.

Next we state a result similar to Theorem~\ref{thm1} for right-composition sequences.

\begin{theorem}\label{thm2}
Suppose that $g$ is either the identity function or an elliptic M\"obius transformation that fixes $\mathbb{D}$, and $g_1,g_2,\dotsc$ are non-constant holomorphic self-maps of $\mathbb{D}$ for which 
\[
\sum_{n=1}^\infty \rho(g_n(a),g(a))<+\infty\quad\text{and}\quad\sum_{n=1}^\infty \rho(g_n(b),g(b))<+\infty,
\]
for two distinct points $a,b\in\mathbb{D}$. Then the sequence $(G_ng^{-n})$, where $G_n=g_1 g_{2}\dotsb g_n$, converges locally uniformly on $\mathbb{D}$ to a non-constant holomorphic self-map of $\mathbb{D}$. 
\end{theorem}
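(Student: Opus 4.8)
The plan is to follow the template of the proof of Theorem~\ref{thm1}, while exploiting the fact that for right-composition sequences the most recently adjoined map $g_n$ sits innermost, which makes the telescoping estimate noticeably cleaner than in the left-composition case. Write $H_n = G_ng^{-n}$. As before I would fix $d=\tfrac13\rho(a,b)$ and choose a closed hyperbolic disc $K$, centred at a fixed point of $g$ and containing $a$ and $b$. Since $g$ is either the identity or an elliptic transformation, it is a hyperbolic isometry fixing the centre of $K$, so $g^m(K)=K$ for every $m\in\mathbb{Z}$; in particular $g$ is a conformal automorphism, so Theorem~B applies to the pair $g_n,g$ and, together with the two summability hypotheses, yields $\sum_{n=1}^\infty \sup_{z\in K}\rho(g_n(z),g(z))<+\infty$, exactly as in the previous proof.

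The key step is the one-term estimate. Using $G_n=G_{n-1}g_n$ and writing $w=g^{-n}(z)$, I have $H_n(z)=G_{n-1}(g_n(w))$ and $H_{n-1}(z)=G_{n-1}(g(w))$, so the Schwarz--Pick lemma applied to the single map $G_{n-1}$ gives
\[
\rho(H_n(z),H_{n-1}(z)) \leq \rho(g_n(w),g(w)).
\]
The crucial point is that if $z\in K$ then $w=g^{-n}(z)\in K$, so the right-hand side is at most $\sup_{u\in K}\rho(g_n(u),g(u))$; no auxiliary bound on where the iterates land, and no appeal to the constant $\lambda$, is needed here. Summing over $n$ shows that $\sum_{n=1}^\infty \sup_{z\in K}\rho(H_n(z),H_{n-1}(z))<+\infty$, so $(H_n)$ is uniformly Cauchy on $K$. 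As $K$ is an arbitrarily large compact subset of $\mathbb{D}$, the sequence $(H_n)$ converges locally uniformly on $\mathbb{D}$ to a map $H$. Each $H_n$ is a self-map of $\mathbb{D}$, so once $H$ is shown to be non-constant the cited lemma of \cite{Be2001b} guarantees $H\in\mathcal{H}(\mathbb{D})$.

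The main obstacle is the non-constancy, where the asymmetry with Theorem~\ref{thm1} bites: there the bound $\rho(F_n(z),f^n(z))<d$ transferred directly to $\rho(f^{-n}F_n(z),z)<d$, but here the outer block $G_{n-1}$ is free to contract $a$ and $b$, so I would argue by truncation, as suggested by the remark following the proof of Theorem~\ref{thm1}. Choosing $N$ so large that $\sum_{n\geq N}\sup_{z\in K}\rho(g_n(z),g(z))<d$ and applying the convergence just established to the shifted sequence $g_N,g_{N+1},\dotsc$, I obtain $\hat G_mg^{-m}\to\hat H$, where $\hat G_m=g_Ng_{N+1}\dotsb g_{N+m-1}$; for this tail the telescoping gives $\rho(\hat G_mg^{-m}(z),z)<d$ on $K$, and hence $\rho(\hat H(z),z)\leq d$ there, so
\[
\rho(\hat H(a),\hat H(b)) \geq \rho(a,b)-\rho(\hat H(a),a)-\rho(\hat H(b),b) \geq 3d-d-d = d,
\]
making $\hat H$ non-constant. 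Finally I would recover $H$ from $\hat H$: with the fixed self-map $P=g_1g_2\dotsb g_{N-1}$ one checks that $G_ng^{-n}=P\,(\hat G_{n-N+1}g^{-(n-N+1)})\,g^{-(N-1)}$, so $H=P\hat Hg^{-(N-1)}$. Since $g^{-(N-1)}$ is an automorphism and $P$ is a composition of non-constant holomorphic maps (hence non-constant), the identity theorem shows $P\hat Hg^{-(N-1)}$ is non-constant, completing the proof. I expect this bookkeeping with the outer factor $P$ and the compensating shift in the power of $g$ to be the only genuinely delicate point; the convergence itself is more transparent than in the left-composition case.
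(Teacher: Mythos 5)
Your proof is correct and follows essentially the same route as the paper's: the same Schwarz--Pick estimate applied to $G_{n-1}$ with $w=g^{-n}(z)\in K$, the same telescoping bound, and the same three-point inequality $\rho(\cdot,a)+\rho(a,b)+\rho(b,\cdot)$ argument for non-constancy. The only difference is that you spell out explicitly the truncation bookkeeping (recovering $H=P\hat{H}g^{-(N-1)}$ and invoking non-constancy of $P$, which is where the hypothesis that the $g_n$ are non-constant enters) that the paper compresses into a ``we can assume'' remark.
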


\begin{proof}
Let  $d = \tfrac13\rho(a,b)$ and let  $K$  be a closed hyperbolic disc that is centred at a fixed point of $g$ and that contains $a$ and $b$. By truncating the right-composition sequence $(G_n)$ by a fixed finite number of terms from the left (and relabelling the remaining functions), we can assume that
\[
\sum_{n=1}^\infty\sup_{z\in K} \rho(g_n(z),g(z)) <d.
\]
Now choose a point $z$ in $K$, and let $n$ be a positive integer. By applying the Schwarz--Pick lemma with the function $G_{n-1}$, we see that
\[
\rho(G_ng^{-n}(z),G_{n-1}g^{-(n-1)}(z)) \leq \rho(g_n(w),g(w)),
\]
where $w=g^{-n}(z)$ (and $G_0$ is the identity function). Since $w\in K$, it follows that 
\[
\sum_{n=1}^\infty \rho(G_ng^{-n}(z),G_{n-1}g^{-(n-1)}(z)) < d.
\]
Therefore $(G_ng^{-n})$ is a uniformly Cauchy sequence on $K$, and since $K$ can be chosen to be arbitrarily large, we deduce that  $(G_ng^{-n})$ converges locally uniformly on $\mathbb{D}$ to a function $G$.

This function $G$ belongs to $\mathcal{H}(\mathbb{D})$; we must show that it is not a constant function. To this end, we write $a_n=g^{-n}(a)$, for $n=1,2,\dotsc$, and observe that
\begin{align*}
\rho(G_ng^{-n}(a),a) &\leq \rho(G_n(a_n),G_{n-1}(a_{n-1}))+\rho(G_{n-1}(a_{n-1}),G_{n-2}(a_{n-2}))+\dots +\rho(G_1(a_1),a)\\
&\leq \rho(g_n(a_n),g(a_n))+\rho(g_{n-1}(a_{n-1}),g(a_{n-1}))+\dots +\rho(g_1(a_1),g(a_1)),
\end{align*}
for $n=1,2,\dotsc$, where, to obtain the second inequality, we applied the Schwarz--Pick lemma with the functions $G_{n-1},G_{n-2},\dots G_0$, in that order. Since $a_n\in K$, for each index $n$, we find that $\rho(G_ng^{-n}(a),a)<d$, and similarly $\rho(G_ng^{-n}(b),b)<d$. Consequently,
\[
\rho(G_ng^{-n}(a),G_ng^{-n}(b)) \geq \rho(a,b)-\rho(G_ng^{-n}(a),a)-\rho(G_ng^{-n}(b),b)>3d-d-d=d,
\]
for $n=1,2,\dotsc$. Hence $G$ is a non-constant holomorphic self-map of $\mathbb{D}$.
\end{proof}

The special cases of Theorem~\ref{thm2} when the limit function $g$ is of finite order resemble the similar special cases of Theorem~\ref{thm1}. In particular, when $g$ is the identity function,  Theorem~\ref{thm2} says that if $\sum \rho(g_n(a),a)<+\infty$ and $\sum \rho(g_n(b),b)<+\infty$, then the right-composition sequence $G_n=g_1g_{2}\dotsb g_n$ converges locally uniformly on $\mathbb{D}$ to a non-constant holomorphic self-map of $\mathbb{D}$.

\section{Denjoy--Wolff point inside the disc}\label{sec3}

In this section we consider the stability of the Denjoy--Wolff theorem at holomorphic functions that have a Denjoy--Wolff point inside the unit disc. Central to our approach is the following theorem from \cite[Corollary 2.3]{BaRi1989} and \cite[Theorem 1.2]{Lo1990}.

\newtheorem*{theoremC}{Theorem C}
\begin{theoremC}
Suppose that $K$ is a compact subset of a simply connected hyperbolic domain $D$, and that $g_1,g_2,\dotsc$ are holomorphic maps of $D$ into $K$. Then the right-composition sequence $G_n=g_1g_2\dotsb g_n$ converges locally uniformly on $D$ to a constant in $K$.
\end{theoremC}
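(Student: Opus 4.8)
The statement is conformally invariant, so the plan is first to reduce to the case $D=\mathbb{D}$. Fixing a Riemann map $\phi\colon\mathbb{D}\to D$ and setting $\tilde g_n=\phi^{-1}g_n\phi$, each $\tilde g_n$ is a holomorphic self-map of $\mathbb{D}$ whose image lies in the compact set $\tilde K=\phi^{-1}(K)$, and $\tilde G_n=\phi^{-1}G_n\phi=\tilde g_1\tilde g_2\dotsb\tilde g_n$; locally uniform convergence of $(\tilde G_n)$ to a constant in $\tilde K$ transfers back, via continuity of $\phi$, to the desired conclusion for $(G_n)$. Hence I may assume $D=\mathbb{D}$, write $\rho$ for its hyperbolic metric, and assume $g_n(\mathbb{D})\subseteq K$ with $K$ compact in $\mathbb{D}$.

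The crux is a \emph{uniform} contraction estimate. The Schwarz--Pick lemma alone gives only $\rho(g_n(x),g_n(y))\le\rho(x,y)$, and the compact-set refinement quoted in the introduction yields a contraction factor depending on the individual map; I need a single factor valid for every $g_n$ simultaneously. Since $K$ is compact, choose $r<R<1$ with $K\subseteq\{|z|\le r\}\subseteq\{|z|<R\}$. The key claim is that every holomorphic map $h\colon\mathbb{D}\to\{|z|<R\}$ satisfies
\[
\rho(h(x),h(y))\le R\,\rho(x,y),\qquad x,y\in\mathbb{D}.
\]
To prove this I would apply the Schwarz--Pick lemma to $h$ regarded as a map from $\mathbb{D}$ into the disc $\{|z|<R\}$, bounding the hyperbolic distance in $\{|z|<R\}$ between $h(x)$ and $h(y)$ by $\rho(x,y)$, and then compare the two hyperbolic densities across the inclusion $\{|z|<R\}\hookrightarrow\mathbb{D}$. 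These densities are explicit, and a short computation shows their ratio is at most $R$ throughout $\{|z|\le R\}$, which delivers the factor $R$ upon integrating along a geodesic. I expect this density comparison to be the main obstacle, in the sense that it is the one genuinely new ingredient beyond Schwarz--Pick; everything hinges on the images of the $g_n$ lying in a \emph{fixed} subdisc, not merely on the maps being non-automorphisms.

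With the claim in hand, each $g_n$ is an $R$-contraction, so the composition $G_n=g_1\dotsb g_n$ is an $R^n$-contraction; in particular $\operatorname{diam}_\rho G_n(K)\le R^n\operatorname{diam}_\rho K$. Next I would exploit nesting: because $g_{n+1}(\mathbb{D})\subseteq K$, we have $G_{n+1}(\mathbb{D})=G_n(g_{n+1}(\mathbb{D}))\subseteq G_n(K)$, so the closures $\overline{G_n(\mathbb{D})}$ form a decreasing sequence of compact subsets of $K$ with $\operatorname{diam}_\rho\overline{G_{n+1}(\mathbb{D})}\le R^n\operatorname{diam}_\rho K\to 0$.

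Finally I would conclude by completeness. A decreasing sequence of nonempty compact sets with diameters tending to $0$ has intersection a single point $\zeta\in K$. Since both $\zeta$ and every value $G_n(z)$ belong to $\overline{G_n(\mathbb{D})}$, we get $\sup_{z\in\mathbb{D}}\rho(G_n(z),\zeta)\le\operatorname{diam}_\rho\overline{G_n(\mathbb{D})}\to 0$, so $G_n\to\zeta$ uniformly on $\mathbb{D}$ in the hyperbolic metric. As all the values lie in the compact set $K$, on which the hyperbolic and Euclidean metrics are comparable, this yields uniform, hence locally uniform, convergence to the constant $\zeta\in K$; transferring back through $\phi$ then completes the proof.
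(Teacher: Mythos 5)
Your proposal is correct, but there is no internal proof to compare it against: the paper does not prove Theorem~C at all, importing it instead from Baker--Rippon \cite[Corollary~2.3]{BaRi1989} and Lorentzen \cite[Theorem~1.2]{Lo1990}. Measured against those sources, your argument is essentially the classical one, and every step checks out. The reduction to $D=\mathbb{D}$ by a Riemann map is sound (this is exactly where simple connectivity is used). More importantly, you have put your finger on the right issue: the refinement of the Schwarz--Pick lemma quoted in the paper's introduction (and used in the proof of Theorem~\ref{thm3}) produces a contraction constant $k<1$ that depends on the individual map, which cannot give a statement uniform over the whole sequence $(g_n)$; what is needed is a single constant valid for every holomorphic map with image in a fixed subdisc. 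Your density comparison supplies it, and the computation you defer does work: with $\lambda_{\mathbb{D}}(z)=2/(1-|z|^2)$ and $\lambda_R(z)=2R/(R^2-|z|^2)$ the ratio is
\[
\frac{\lambda_{\mathbb{D}}(z)}{\lambda_R(z)}=\frac{R^2-|z|^2}{R\,(1-|z|^2)}\leq R \quad\text{on } \{|z|<R\},
\]
since the inequality rearranges to $|z|^2(1-R^2)\geq 0$; combined with the Schwarz--Pick lemma applied to $h\colon\mathbb{D}\to\{|z|<R\}$ and integration along a geodesic of the smaller disc, this yields $\rho(h(x),h(y))\leq R\,\rho(x,y)$. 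The remainder --- $G_n$ is an $R^n$-contraction, the sets $\overline{G_n(\mathbb{D})}$ are nested compact subsets of $K$ with hyperbolic diameters at most $R^{n-1}\operatorname{diam}_\rho K$, hence shrink to a single point $\zeta\in K$, and comparability of the hyperbolic and Euclidean metrics on $K$ converts this into the stated convergence --- is routine and correctly executed. In fact you prove slightly more than the statement claims: the convergence is uniform on all of $D$, not merely locally uniform. Your route via uniform contraction of value regions is also the spirit of Lorentzen's original proof, so it is the natural one.
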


Using Theorem~C we obtain the following strong stability result for right-composition sequences.

\begin{theorem}\label{thm3}
Let $g$ be a holomorphic self-map of $\mathbb{D}$ with a Denjoy--Wolff point $\zeta$ in $\mathbb{D}$. Then there is a neighbourhood $\mathcal{U}$ of $g$ in $\mathcal{H}(\mathbb{D})$ such that if $g_1,g_2,\dotsc$ belong to $\mathcal{U}$, then the right-composition sequence $G_n=g_1g_{2}\dotsb g_n$ converges locally uniformly on $\mathbb{D}$ to a constant in $\mathbb{D}$.
\end{theorem}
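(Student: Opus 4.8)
The plan is to produce a single hyperbolic disc $B$ centred at $\zeta$ together with a neighbourhood $\mathcal{U}$ of $g$ such that \emph{every} map in $\mathcal{U}$ carries $B$ into one fixed compact subset $K$ of $B$. Theorem~C then delivers convergence of $(G_n)$ on $B$, and a normal-families argument upgrades this to locally uniform convergence on all of $\mathbb{D}$.

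First I would record the geometry at $\zeta$. Since $\zeta\in\mathbb{D}$ is the Denjoy--Wolff point of $g$, the map $g$ fixes $\zeta$ and, by Theorem~A, is neither the identity nor elliptic; hence it is not a conformal automorphism of $\mathbb{D}$. Fix a closed hyperbolic disc $\overline{B}=\overline{B}_r$ of radius $r$ centred at $\zeta$. By the strict-contraction form of the Schwarz--Pick lemma quoted in the introduction, there is a constant $k<1$ with $\rho(g(z),\zeta)=\rho(g(z),g(\zeta))\leq k\,\rho(z,\zeta)\leq kr$ for all $z\in\overline{B}$, so $g(\overline{B})\subseteq\overline{B}_{kr}$ with $kr<r$. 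Putting $r'=\tfrac12(1+k)r<r$ and $K=\overline{B}_{r'}$, the triangle inequality shows that any $h\in\mathcal{H}(\mathbb{D})$ with $\sup_{z\in\overline{B}}\rho(h(z),g(z))<\tfrac12(1-k)r$ satisfies $h(\overline{B})\subseteq K$. Because $g(\overline{B})$ lies in the fixed compact set $\overline{B}_{kr}\subset\mathbb{D}$, locally uniform convergence controls this hyperbolic supremum, so the set $\mathcal{U}$ of all such $h$ is a neighbourhood of $g$ in $\mathcal{H}(\mathbb{D})$; this is the required $\mathcal{U}$.

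Now take any sequence $g_1,g_2,\dotsc$ in $\mathcal{U}$. Each $g_i$ maps $B$ into the compact set $K\subset B$, so applying Theorem~C with $D=B$ (a simply connected hyperbolic domain) shows that $G_n=g_1g_2\dotsb g_n$ converges locally uniformly on $B$ to some constant $c\in K\subset\mathbb{D}$. It remains to pass from $B$ to the whole of $\mathbb{D}$, and this is the step that needs care: the maps in $\mathcal{U}$ are controlled only on $\overline{B}$, so Theorem~C cannot be applied with $D=\mathbb{D}$ directly, and indeed no choice of $\mathcal{U}$ can force every $h\in\mathcal{U}$ to map all of $\mathbb{D}$ into a fixed compact set. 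I would instead argue by normality. The maps $G_n$ are uniformly bounded, hence form a normal family on $\mathbb{D}$ by Montel's theorem. If a subsequence of $(G_n)$ converges locally uniformly on $\mathbb{D}$ to a limit $\phi$, then $\phi=c$ on $B$; as $\phi$ attains the interior value $c\in\mathbb{D}$ it cannot be a constant of modulus one, so by the dichotomy for limits of self-maps (\cite[Lemma~2.1]{Be2001b}) it is holomorphic, whence the identity theorem forces $\phi\equiv c$. Thus every locally uniformly convergent subsequence of $(G_n)$ has limit $c$, and by normality the whole sequence $(G_n)$ converges locally uniformly on $\mathbb{D}$ to the constant $c\in\mathbb{D}$.

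The only genuine obstacle is this last extension: Theorem~C supplies convergence merely on the sub-disc $B$ on which the perturbed maps are controlled, and the normal-families argument is what bridges the gap, exploiting that any subsequential limit is already pinned to the interior constant $c$ throughout $B$.
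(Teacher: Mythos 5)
Your proof is correct, and its core is identical to the paper's: both fix a hyperbolic disc $B$ (the paper's $D=D(\zeta,r)$) centred at the fixed point, use the strict form of the Schwarz--Pick lemma to obtain a contraction constant $k<1$ (valid since, as you note, $g$ cannot be an automorphism when its Denjoy--Wolff point lies in $\mathbb{D}$), define $\mathcal{U}$ so that every map in it sends $B$ into one fixed compact subset $K$ of $B$, and then apply Theorem~C on $B$. Where you genuinely depart from the paper is the passage from convergence on $B$ to convergence on all of $\mathbb{D}$. The paper handles this with the single remark that the radius $r$ ``was chosen arbitrarily''; read literally this is delicate, because the neighbourhood $\mathcal{U}$ it constructs depends on $r$, so once $\mathcal{U}$ is fixed one cannot rerun the construction on larger discs for the same sequence $(g_n)$, and membership in $\mathcal{U}$ gives no control of the maps $g_n$ outside $\overline{B}$. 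Your normal-families bridge --- Montel's theorem, then the observation that any locally uniform subsequential limit agrees with the constant $c\in\mathbb{D}$ on $B$, hence is holomorphic rather than a unimodular constant by \cite[Lemma~2.1]{Be2001b}, hence is identically $c$ by the identity theorem, so the whole sequence converges to $c$ locally uniformly on $\mathbb{D}$ --- is precisely the Vitali--Porter-type argument needed to make that final step airtight. In short, the two proofs share the same construction and the same key lemma (Theorem~C); yours is slightly longer, but it supplies a rigorous justification exactly at the one point where the paper's own proof is loosest.
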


We use the notation $D(c,r)$ for the hyperbolic open disc with centre $c$ and radius $r$.

\begin{proof}
Let $D=D(\zeta,r)$, for some $r>0$. Since $\overline{D}$ is a compact set in $\mathbb{D}$, we see from the Schwarz--Pick lemma that there is a positive constant $k<1$ (that depends on $\overline{D}$) with $\rho(g(z),g(w))\leq k\rho(z,w)$, for $z,w\in \overline{D}$. Observe that $g$ fixes $\zeta$, so  $g(D)\subset D(\zeta,s)$, where $s=kr$. Now choose a real number $t$ with $s<t<r$.  Let
\[
\mathcal{U} = \{h\in\mathcal{H}(\mathbb{D}): h(D) \subset D(\zeta,t)\},
\]
a neighbourhood of $g$ in $\mathcal{H}(\mathbb{D})$, and let $K=\overline{D(\zeta,t)}$. If $g_1,g_2,\dotsc$ belong to $\mathcal{U}$, then $g_n(D) \subset K$, for each index $n$, so we can apply Theorem~C to see that the right-composition sequence $G_n=g_1g_2\dotsb g_n$ converges locally uniformly on $D$ to a constant in $K$. And, since the radius $r$ of $D$ was chosen arbitrarily, it follows that $(G_n)$ converges locally uniformly on $\mathbb{D}$ to a constant in $\mathbb{D}$.
 \end{proof}

The hypotheses of Theorem~\ref{thm3} can of course be weakened to assume that all but finitely many of the maps $g_n$ belong to $\mathcal{U}$.

The next example shows that there is no analogue of Theorem~\ref{thm3} for left-composition sequences.

\begin{example}\label{exa2} 
Let $f(z)=z/2$, and let $\mathcal{U}$ be a neighbourhood of $f$ in $\mathcal{H}(\mathbb{D})$. We can choose a positive constant $\delta$ sufficiently small that all the functions $f_n(z)=z/2+\delta e^{i\theta_n}$, where $\theta_n\in\mathbb{R}$, for $n=1,2,\dotsc$, belong to $\mathcal{U}$. The left-composition sequence $F_n=f_nf_{n-1}\dotsb f_1$ satisfies
\[
F_n(z) = \tfrac12 F_{n-1}(z) +\delta e^{i\theta_n}.
\]
Evidently, the parameters $\theta_n$ can be chosen so that $(F_n)$ diverges pointwise on $\mathbb{D}$.\qed
\end{example}

With slightly stronger hypotheses, however, we do obtain controlled behaviour of the left-composition sequence $(F_n)$.

\begin{theorem}\label{thm4}
Let $f$ be a holomorphic self-map of $\mathbb{D}$ with a Denjoy--Wolff point $\zeta$ in $\mathbb{D}$. Suppose that $f_1,f_2,\dotsc$ is a sequence of functions in $\mathcal{H}(\mathbb{D})$ that converges locally uniformly on $\mathbb{D}$ to $f$. Then the left-composition sequence $F_n=f_nf_{n-1}\dotsb f_1$ converges locally uniformly on $\mathbb{D}$ to $\zeta$.
\end{theorem}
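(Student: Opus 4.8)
The plan is to reduce to a single point and then run a hyperbolic contraction argument. First I would observe that $\{F_n\}$ maps $\mathbb{D}$ into $\mathbb{D}$, so it is a normal family; it therefore suffices to prove that $F_n(z)\to\zeta$ for each fixed $z\in\mathbb{D}$, since pointwise convergence of a normal family to the constant $\zeta$ forces locally uniform convergence to $\zeta$. Next I would record the geometry of $f$: because $\zeta\in\mathbb{D}$ places $f$ in case~(ii) of Theorem~A, $f$ is not a conformal automorphism, so the Schwarz--Pick lemma supplies, for each $r>0$, a constant $k=k_r\in(0,1)$ with $\rho(f(w),\zeta)\leq k\rho(w,\zeta)$ for $w\in\overline{D(\zeta,r)}$; in particular $f(\overline{D(\zeta,r)})\subseteq\overline{D(\zeta,kr)}$. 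Since $f_n\to f$ uniformly on the compact disc $\overline{D(\zeta,r)}$, there is an index $N$ such that for $n\geq N$ one has both $f_n(\overline{D(\zeta,r)})\subseteq\overline{D(\zeta,r)}$ and
\[
\rho(f_n(w),\zeta)\leq k\rho(w,\zeta)+\epsilon_n,\qquad w\in\overline{D(\zeta,r)},
\]
where $\epsilon_n=\sup_{w\in\overline{D(\zeta,r)}}\rho(f_n(w),f(w))\to0$. This is the engine of the proof: if the orbit $F_n(z)$ lies in $\overline{D(\zeta,r)}$ at some time $n_0\geq N$, then forward invariance keeps it there for all larger $n$, and iterating the displayed inequality with $k<1$ and $\epsilon_n\to0$ yields $\rho(F_n(z),\zeta)\to0$.

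It therefore remains only to show that the orbit $\big(F_n(z)\big)$ is eventually captured by such a forward-invariant disc, and I expect this to be the main obstacle, since the maps $f_n$ are wholly uncontrolled near $\partial\mathbb{D}$ and so an a~priori drift of the orbit towards the boundary is not obviously excluded. The mechanism I would use is that forward invariance of $\overline{D(\zeta,R)}$ holds for all $n$ beyond some threshold $N(R)$, and that $f^m(\overline{D(\zeta,R)})\subseteq\overline{D(\zeta,k_R^m R)}$ shrinks to $\{\zeta\}$; consequently, once the orbit sits inside a \emph{fixed} disc $\overline{D(\zeta,R)}$ at its invariance time, the engine above (applied on $\overline{D(\zeta,R)}$, and if one wishes followed by a block $f_{l+m}\dotsb f_{l+1}$ that converges to $f^m$ uniformly on $\overline{D(\zeta,R)}$) drives $\rho(F_n(z),\zeta)$ to $0$.

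The crux, then, is to place the orbit inside a fixed $\overline{D(\zeta,R)}$ at a late time without circularity between the radius $R$ and the threshold $N(R)$. Here I would combine the truncation device of Theorems~\ref{thm1} and~\ref{thm2} with the crude drift estimate
\[
\rho(F_n(z),\zeta)\leq\rho(F_{n-1}(z),\zeta)+\rho(f_n(\zeta),\zeta),
\]
which follows from the Schwarz--Pick lemma applied to the pair $\big(F_{n-1}(z),\zeta\big)$ and in which $\rho(f_n(\zeta),\zeta)\to0$. After discarding finitely many initial maps (which does not affect the limit) and choosing $R$ large enough to contain the resulting finite transient, the orbit enters $\overline{D(\zeta,R)}$ at the invariance time and is thereafter trapped. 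Controlling this interplay between the drift and the contraction is the delicate step; once capture is secured the engine completes the proof, and the reduction to a single point upgrades the conclusion to locally uniform convergence to $\zeta$.
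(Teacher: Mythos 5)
Your normal-family reduction and your contraction ``engine'' are both sound, and the engine is essentially the paper's own estimate (the paper phrases it as shadowing, $\rho(F_n(z),f^n(z))\leq\sup_K\rho(f_n,f)+k\,\rho(F_{n-1}(z),f^{n-1}(z))$ on a disc $K$ that the $f_n$ eventually map into itself; your inequality $\rho(f_n(w),\zeta)\leq k\rho(w,\zeta)+\epsilon_n$ is the same mechanism). The gap is exactly where you place it: capture. Your proposed resolution --- discard finitely many initial maps and ``choose $R$ large enough to contain the resulting finite transient'' --- does not break the circularity you flag; it restates it. To run the engine you need a pair $(m,R)$ with $m\geq N(R)$ (so that every $f_n$ with $n>m$ maps $\overline{D(\zeta,R)}$ into itself) and $\rho(F_m(z),\zeta)\leq R$, and your drift estimate bounds the latter only by $\rho(z,\zeta)+\sum_{j\leq m}\rho(f_j(\zeta),\zeta)$. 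But the hypotheses give only $\rho(f_n(\zeta),\zeta)\to 0$, not summability (summability is the hypothesis of Theorems~\ref{thm1} and~\ref{thm2}, not of Theorem~\ref{thm4}), so this bound can grow without limit in $m$; and locally uniform convergence carries no rate, so $N(R)$ can grow arbitrarily fast in $R$, because deviations of $f_n$ from $f$ concentrated near $\partial\mathbb{D}$ are invisible to locally uniform convergence. For a map such as $f(z)=z(1+z)/2$ (Denjoy--Wolff point $0$, repelling boundary fixed point $1$, so the contraction constant $k_R\to 1$ and $(1-k_R)R\to 0$), one can build $f_n\to f$ --- compose $f$ with rotations by angles $1/\log n$ and with automorphisms moving $0$ by hyperbolic distance $1/n$ --- for which $N(R)$ grows like $e^{e^R}$ while $\sum_{j\leq N(R)}\rho(f_j(0),0)$ is of order $e^R\gg R$. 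For such sequences no admissible pair $(m,R)$ exists, so the capture step is not merely unproved: the drift mechanism cannot prove it.

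The fix is to abandon orbit capture altogether, which is what the paper does. Truncate so that all remaining maps preserve one fixed disc $K=\overline{D(\zeta,R)}$; the engine then gives $\tilde F_n:=f_nf_{n-1}\dotsb f_{m+1}\to\zeta$ uniformly on $K$ --- for orbits started \emph{in} $K$, where no capture question arises. Now use normality in the opposite direction to yours: $\{\tilde F_n\}$ is normal, and every locally uniform subsequential limit agrees with the constant $\zeta$ on $K$, a set with nonempty interior, hence is identically $\zeta$ (Vitali--Porter, or the identity theorem applied to holomorphic limits; a constant limit on $\partial\mathbb{D}$ is excluded since $\zeta\in\mathbb{D}$). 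Thus $\tilde F_n\to\zeta$ locally uniformly on all of $\mathbb{D}$, and therefore $F_n=\tilde F_n\circ F_m\to\zeta$ locally uniformly, irrespective of where the discarded maps $F_m$ send the original points. (The paper compresses this spreading step into its final ``Hence'', but it is soft.) In short, your reduction ``pointwise everywhere $\Rightarrow$ locally uniform'' is the right tool pointed the wrong way: what the proof needs is ``uniform on one disc $\Rightarrow$ locally uniform everywhere'', applied to the truncated sequence, and that replaces the capture step entirely.
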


\begin{proof}
Let $K$ be a closed hyperbolic disc centred at $\zeta$. Observe that $f$ maps $K$ inside a smaller closed hyperbolic disc centred at $\zeta$. Since $f_n\to f$ uniformly on $K$ we see that $f_n$ maps $K$ inside itself for sufficiently large $n$. By truncating $F_n$ by finitely many terms on the right (and relabelling) we can assume that in fact $f_n(K)\subset K$ for all $n=1,2,\dotsc$. 

We define $k$ to be a constant between $0$ and $1$ for which $\rho(f(z),f(w))\leq k\rho(z,w)$, for $z,w\in K$.

Choose $z\in K$. Observe that $f^n(z)\in K$ and $F_n(z)\in K$, for $n=1,2,\dotsc$. Then
\begin{align*}
\rho(F_n(z),f^n(z)) &\leq \rho(F_n(z),f(F_{n-1}(z)))+\rho(f(F_{n-1}(z)),f^n(z))\\
&\leq \sup_{w\in K}\rho (f_n(w),f(w)) + k\rho(F_{n-1}(z),f^{n-1}(z)),
\end{align*}
for $n=1,2,\dotsc$. Repeating this argument, we see that 
\[
\rho(F_n(z),f^n(z)) \leq (1+k+k^2+\dots+k^{n-1})\sup_{w\in K}\rho (f_n(w),f(w))\leq \frac{1}{1-k}\sup_{w\in K}\rho (f_n(w),f(w)),
\] 
for $n=1,2,\dotsc$. Since $(f_n)$ converges locally uniformly on $\mathbb{D}$ to $f$ we see that $\rho(F_n(z),f^n(z))\to 0$ uniformly on $K$, so $F_n\to \zeta$ uniformly on $K$. Hence $(F_n)$ converges locally uniformly on $\mathbb{D}$ to the constant $\zeta$.
\end{proof}

Notice that the left-composition sequence $(F_n)$ of Theorem~\ref{thm4} converges locally uniformly on $\mathbb{D}$ to $\zeta$, but the right-composition sequence $(G_n)$ of  Theorem~\ref{thm3} converges to a constant that need not be $\zeta$. After all, adjusting $g_1$ causes the constant to change.

\section{Denjoy--Wolff point on the boundary of the disc}

This final section considers the stability of the Denjoy--Wolff theorem at holomorphic functions $f$ that have a Denjoy--Wolff point on the boundary of the unit disc. In a sense, this circumstance is the least stable of those considered so far. Indeed, it is straightforward to find holomorphic maps $f_1,f_2,\dotsc$ with $f_n\to f$ (for a suitable choice of $f$ with a Denjoy--Wolff point on the boundary of $\mathbb{D}$) for which the behaviour of the left-composition sequence $F_n=f_nf_{n-1}\dotsb f_1$ is erratic. Nevertheless, the following theorem shows that if we assume that the convergence of $(f_n)$ to $f$ is sufficiently rapid, then the sequences $(F_n)$ and $(f^n)$ have similar dynamics. 

\begin{theorem}\label{thm5}
Let $f$ be a holomorphic self-map of $\mathbb{D}$ with a Denjoy--Wolff point $\zeta$ on the boundary of $\mathbb{D}$. Then there exist neighbourhoods $\mathcal{U}_1, \mathcal{U}_2,\dotsc$ of $f$ in $\mathcal{H}(\mathbb{D})$ such that if $f_n\in \mathcal{U}_n$, for $n=1,2,\dotsc$, then the left-composition sequence $F_n=f_n f_{n-1}\dotsb f_1$ converges locally uniformly on $\mathbb{D}$ to $\zeta$.
\end{theorem}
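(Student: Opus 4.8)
The plan is to reduce the assertion to pointwise convergence at a single point and then upgrade to local uniform convergence by a normality argument. Fix a base point $z_0\in\mathbb{D}$ and write $p_k=f^k(z_0)$ for the reference orbit; by Theorem~A we have $p_k\to\zeta$. Choose positive reals $\delta_1,\delta_2,\dotsc$ with $\sum_{k}\delta_k=D<\infty$, and let $L_n$ be the closed hyperbolic disc of radius $D$ centred at $p_{n-1}=f^{n-1}(z_0)$; although these discs march out towards the boundary point $\zeta$, each is a compact subset of $\mathbb{D}$. I would then define
\[
\mathcal{U}_n=\Big\{h\in\mathcal{H}(\mathbb{D}):\sup_{w\in L_n}\rho(h(w),f(w))<\delta_n\Big\},
\]
which is an open neighbourhood of $f$ in $\mathcal{H}(\mathbb{D})$.

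The heart of the proof is an induction showing that the perturbed orbit never strays far from the reference orbit: I claim $\rho(F_n(z_0),p_n)\le\sum_{k\le n}\delta_k\le D$ for every $n$. The base case is trivial. For the inductive step, the hypothesis places $F_{n-1}(z_0)$ inside $L_n$, so membership $f_n\in\mathcal{U}_n$ bounds the deviation of $f_n$ from $f$ at that point by $\delta_n$; applying the Schwarz--Pick lemma to $f$ (which is non-expanding) together with the triangle inequality gives
\[
\rho(F_n(z_0),p_n)\le\rho\big(f_n(F_{n-1}(z_0)),f(F_{n-1}(z_0))\big)+\rho\big(f(F_{n-1}(z_0)),f(p_{n-1})\big)<\delta_n+\rho(F_{n-1}(z_0),p_{n-1}),
\]
which advances the bound by exactly $\delta_n$ and closes the induction.

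Since $p_n\to\zeta\in\partial\mathbb{D}$ while $F_n(z_0)$ remains within the fixed hyperbolic distance $D$ of $p_n$, and hyperbolic discs of fixed radius shrink to points in the Euclidean metric as their centres approach the boundary, it follows that $F_n(z_0)\to\zeta$. To promote this to local uniform convergence I invoke normality: the maps $F_n$ are holomorphic self-maps of $\mathbb{D}$, hence a normal family, and any locally uniform limit of a subsequence is either a holomorphic self-map of $\mathbb{D}$ or a constant of modulus one. As such a limit $G$ must satisfy $G(z_0)=\lim F_n(z_0)=\zeta\notin\mathbb{D}$, the first alternative is impossible and $G\equiv\zeta$. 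Every subsequential limit therefore equals $\zeta$, so $F_n\to\zeta$ locally uniformly on $\mathbb{D}$.

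The main obstacle is the absence of any uniform hyperbolic contraction when $\zeta$ lies on the boundary: unlike the interior case of Theorem~\ref{thm4}, there is no constant $k<1$ with $\rho(f(z),f(w))\le k\rho(z,w)$ on a neighbourhood of the Denjoy--Wolff point, and for a parabolic $f$ the iterates contract arbitrarily slowly, so a single summable-deviation hypothesis cannot be amplified into a convergent geometric series. The device that circumvents this is to ask for summability of the deviations \emph{only along the reference orbit} and to lean solely on the non-expansion property of $f$; reducing to a single base point via normality is precisely what allows the neighbourhoods $\mathcal{U}_n$ to be fixed in advance, independently of the compact set on which uniform convergence is eventually required.
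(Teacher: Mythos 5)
Your proof is correct and follows essentially the same route as the paper's: neighbourhoods $\mathcal{U}_n$ defined by summable deviation tolerances on compact hyperbolic discs that track the reference orbit, an induction via the triangle inequality and the Schwarz--Pick lemma keeping $F_n(z_0)$ within a fixed hyperbolic distance of $f^n(z_0)$, and the Euclidean collapse of bounded hyperbolic discs near the boundary to conclude $F_n(z_0)\to\zeta$. The only (inessential) difference is the final upgrade to local uniform convergence, where the paper simply observes $\rho(F_n(z),F_n(0))\leq\rho(z,0)$ by Schwarz--Pick, whereas you invoke normality and the dichotomy for limits of self-maps; both are valid.
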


\begin{proof}
For each positive integer $n$, we define $D_n$ to be the open hyperbolic disc centred at $0$ of radius $1+\rho(f^{n-1}(0),0)$, and let
\[
\mathcal{U}_n = \{h \in \mathcal{H}(\mathbb{D}): \rho(h(z),f(z))<1/2^n\text{ for }z\in D_n\},
\]
a neighbourhood of $f$ in $\mathcal{H}(\mathbb{D})$. Suppose that $f_n\in \mathcal{U}_n$, for $n=1,2,\dotsc$.

We will prove by induction on $m$ that
\[
\rho(F_m(0),f^m(0)) < 1 - \frac{1}{2^m},
\]
for $m=1,2,\dotsc$. This is certainly true for $m=1$, by definition of $\mathcal{U}_1$. Suppose that it is true for the integer $m=n-1$, where $n>1$. Then  
\begin{align*}
\rho(F_n(0),f^n(0)) &\leq \rho(F_n(0),f(F_{n-1}(0)))+\rho(f(F_{n-1}(0)),f^n(0))\\
&\leq \rho(F_n(0),f(F_{n-1}(0)))+\rho(F_{n-1}(0),f^{n-1}(0))\\
&<\rho(F_n(0),f(F_{n-1}(0)))+1-\frac{1}{2^{n-1}},
\end{align*}
where we have applied the triangle inequality, the Schwarz--Pick lemma, and the induction hypothesis. Now, since 
\[
\rho(F_{n-1}(0),0) \leq \rho(F_{n-1}(0),f^{n-1}(0))+\rho(f^{n-1}(0),0)<1 + \rho(f^{n-1}(0),0),
\]
we see that $F_{n-1}(0)\in D_{n}$. So, by definition of $\mathcal{U}_n$, we have
\[
\rho(F_n(0),f(F_{n-1}(0)))=\rho(f_n(F_{n-1}(0)),f(F_{n-1}(0)))<\frac{1}{2^n}.
\] 
Combining the inequalities obtained we conclude that
\[
\rho(F_n(0),f^n(0))<\rho(F_n(0),f(F_{n-1}(0)))+1-\frac{1}{2^{n-1}}< \frac{1}{2^n}+1-\frac{1}{2^{n-1}}=1 - \frac{1}{2^n}.
\] 
This completes the proof by induction.

A consequence of this observation is that $\rho(F_n(0),f^n(0))<1$, for each positive integer $n$. Then, since $f^n(0)\to \zeta$, a point on the boundary of $\mathbb{D}$, we can use a formula for the hyperbolic metric in $\mathbb{D}$ such as 
\[
\sinh \tfrac12 \rho (z,w) = \frac{|z-w|}{\sqrt{(1-|z|^2)(1-|w|^2)}},
\]
to see that $F_n(0)\to \zeta$ also.

Furthermore, we have that $\rho(F_n(z),F_n(0))\leq \rho(z,0)$, for any point $z\in\mathbb{D}$, and from this inequality we see that $(F_n)$ converges locally uniformly on $\mathbb{D}$ to $\zeta$ (with convergence in the Euclidean metric).
\end{proof}

There is no such result as Theorem~\ref{thm5} for right-composition sequences. To see this, consider the function $g(z)=z+1$ acting on the upper half-plane $\mathbb{H}$ with Denjoy--Wolff point $\infty$. (Here $\mathbb{H}$ takes the place of the unit disc $\mathbb{D}$.) Let $h(z)=i+e^{2\pi iz}$, which is a holomorphic self-map of $\mathbb{H}$ that satisfies $hg=h$. Now consider the right-composition sequence $G_n=g_1g_2\dotsb g_n$, where $g_1=h$ and $g_n=g$, for $n>1$. Then $(g_n)$ converges to $g$ in the fastest possible way, but $G_n=hg^{n-1}=h$.

The following, similar example exhibits even worse behaviour of the sequence $(G_n)$. We provide only a sketch of the details, which requires the theory of prime ends (see, for example, \cite[Section~17]{Mi2006}).

\begin{example}\label{exa3} 
This example also uses $\mathbb{H}$ rather than $\mathbb{D}$. We define $g(z)=z/2$, which is a holomorphic self-map of $\mathbb{H}$ with Denjoy--Wolff point $0$. Let $D$ be the simply connected domain shown in Figure~\ref{fig1}. It is obtained by removing two vertical line segments and various horizontal line segments from $\mathbb{H}$ to leave an infinite snake-like domain, as shown in the figure. There are infinitely many horizontal line segments, and they accumulate at the real interval $[-1,1]$, which is a prime end of $D$.

\begin{figure}[ht]
\centering
\begin{tikzpicture}[scale=1.91]
\newcommand{\eps}{0.2}
\draw (-2,0) --(2,0);
\draw (-1,0)--  (-1,1);
\draw (1,0)--  (1,1);

\draw (-1,1) -- (1-\eps,1);
\draw (-1+\eps,0.5) -- (1,0.5);
\draw (-1,0.3) -- (1-\eps,0.3);
\draw (-1+\eps,0.2) -- (1,0.2);
\draw (-1,0.13) -- (1-\eps,0.13);
\draw (-1+\eps,0.08) -- (1,0.08);
\draw (-1,0.04) -- (1-\eps,0.04);
\draw (-1+\eps,0.02) -- (1,0.02);

\node [below] at (0,0) {0};
\node [below] at (-1,0) {$-1\phantom{-}$};
\node [below] at (1,0) {$1$};
\end{tikzpicture}
\caption{Domain $D$ with a prime end at $[-1,1]$} 
\label{fig1}
\end{figure}
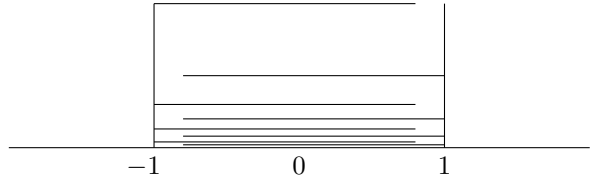

We define $h$ to be a conformal map from $\mathbb{H}$ to $D$. This map induces a one-to-one correspondence between the extended real line (the boundary of $\mathbb{H}$) and the prime ends of $D$. We choose $h$ such that $0$ corresponds to the prime end $[-1,1]$. Now consider the right-composition sequence $G_n=g_1g_2\dotsb g_n$, where $g_1=h$ and $g_n=g$, for $n>1$. Then $(g_n)$ converges to $g$ as quickly as possible, however, we will show that $(G_n(i))$ diverges. To see this, first observe that
\[
G_n(i) = hg^n(i) = h(i/2^n), \quad \text{for $n=1,2,\dotsc$.}
\]
Since $h$ is a conformal map from $\mathbb{H}$ to $D$, it preserves hyperbolic distance between these two domains. So the hyperbolic length of the hyperbolic geodesic $\Gamma_n$ between $G_{n-1}(i)$ and $G_n(i)$ in $D$ is equal to the hyperbolic distance between $i/2^{n-1}$ and $i/2^n$ in $\mathbb{H}$, namely $\log 2$. Now, as $n$ increases, $i/2^n$ approaches $0$ (in the Euclidean metric), and $G_n(i)$ approaches the prime end $[-1,1]$ (in the Euclidean metric). By applying a simple estimate with the quasihyperbolic metric, it can then be shown that the Euclidean length of $\Gamma_n$ converges to $0$. And from the shape of $D$ we can see that $(G_n(i))$ accumulates at an interval within $[-1,1]$, so it diverges.\qed
\end{example}

Example~\ref{exa3} indicates that there is little hope of obtaining a simple analogue of Theorem~\ref{thm5} for right-composition sequences. It also suggests that we ought to shift our perspective when considering right-composition sequences, in the following sense. The sequence $(G_n(i))$ certainly diverges in the closure of the domain $\mathbb{H}$, but it converges in the Carath\'eodory compactification of the domain $D$, to the prime end $[-1,1]$. In general, for a right-composition sequence $G_n=g_1g_2\dotsb g_n$ acting on $\mathbb{D}$, it is likely to be more rewarding to consider convergence of $(G_n)$ not with respect to $\mathbb{D}$, but with respect to the set $\bigcap G_n(\mathbb{D})$ (or perhaps its interior), which in many cases will be a simply connected domain. We will examine this idea more thoroughly in future work.

\begin{bibdiv}
\begin{biblist}

\bib{BaRi1989}{article}{
   author={Baker, I.N.},
   author={Rippon, P.J.},
   title={Towers of exponents and other composite maps},
   journal={Complex Variables Theory Appl.},
   volume={12},
   date={1989},
   number={1-4},
}

\bib{Be2001a}{article}{
   author={Beardon, A.F.},
   title={Continued fractions, discrete groups and complex dynamics},
   journal={Comput. Methods Funct. Theory},
   volume={1},
   date={2001},
   number={2},
   pages={535--594},
}

\bib{Be2001b}{article}{
   author={Beardon, A.F.},
   title={Semi-groups of analytic maps},
   journal={Comput. Methods Funct. Theory},
   volume={1},
   date={2001},
   number={1},
   pages={249--258},
}

\bib{ChSh2019}{article}{
   author={Christodoulou, A.},
   author={Short, I.},
   title={A hyperbolic-distance inequality for holomorphic maps},
   journal={Ann. Acad. Sci. Fenn. Math.},
   volume={44},
   date={2019},
   number={1},
   pages={293--300},
}

\bib{Gi1988}{article}{
   author={Gill, J.},
   title={Compositions of analytic functions of the form
   $F_n(z)=F_{n-1}(f_n(z)),\;f_n(z)\to f(z)$},
   journal={J. Comput. Appl. Math.},
   volume={23},
   date={1988},
   number={2},
   pages={179--184},
}

\bib{Gi1990}{article}{
   author={Gill, J.},
   title={Complex dynamics of the limit periodic system
   $F_n(z)=F_{n-1}(f_n(z)),\;f_n\to f$},
   note={Extrapolation and rational approximation (Luminy, 1989)},
   journal={J. Comput. Appl. Math.},
   volume={32},
   date={1990},
   number={1-2},
   pages={89--96},
}

\bib{Lo1990}{article}{
   author={Lorentzen, L.},
   title={Compositions of contractions},
   note={Extrapolation and rational approximation (Luminy, 1989)},
   journal={J. Comput. Appl. Math.},
   volume={32},
   date={1990},
   number={1-2},
   pages={169--178},
}

\bib{Mi2006}{book}{
   author={Milnor, J.},
   title={Dynamics in one complex variable},
   series={Annals of Mathematics Studies},
   volume={160},
   edition={3},
   publisher={Princeton University Press, Princeton, NJ},
   date={2006},
   pages={viii+304},
}

\bib{Po1981}{article}{
   author={Pommerenke, Ch.},
   title={On asymptotic iteration of analytic functions in the disk},
   journal={Analysis},
   volume={1},
   date={1981},
   number={1},
   pages={45--61},
}

\end{biblist}
\end{bibdiv}

\end{document}